\newtheorem{theorem}{Theorem}[section]
\newtheorem{lemma}[theorem]{Lemma}
\theoremstyle{definition}
\newtheorem{definition}[theorem]{Definition}
\theoremstyle{remark}
\newtheorem{remark}[theorem]{Remark}
\newcommand{\norm}[1]{\left\lVert#1\right\rVert}
\newcommand{\abs}[1]{\left\lvert#1\right\rvert}
\newcommand{\pa}[1]{\left( #1 \right)}
\newcommand{\rpa}[1]{\left[ #1 \right]}
\newcommand{\br}[1]{\left\lbrace #1\right\rbrace}
\newcommand{\R}{\mathbb{R}}
\newcommand{\CC}{\mathbf{C}}
\newcommand{\KK}{\mathbf{K}}
\newcommand{\DD}{\mathbf{D}}
\newcommand{\II}{{\mathbf{I}}}
\newcommand{\K}{\mathcal{K}}
\newcommand{\HH}{\mathcal{H}}
\newcommand{\A}{\mathcal{A}}
\newcommand{\B}{\mathcal{B}}
\numberwithin{equation}{section}
\def\1{\mathbbm{1}}
\definecolor{byzantium}{rgb}{0.44, 0.16, 0.39}
\renewcommand{\div}{\operatorname{div}}
\renewcommand{\Re}{\operatorname{Re}}
\renewcommand{\det}{\operatorname{det}}
\title{Convergence to equilibrium for a degenerate McKean-Vlasov Equation}
\author{Manh Hong Duong$^1$ \& Amit Einav$^2$}
\address{$^1$University of Birmingham, School of Mathematics, Edgbaston, Birmingham,
	B15 2TT, United Kingdom}
\email{h.duong@bham.ac.uk}
\address{$^2$Durham University, School of Mathematical Sciences, Upper Mountjoy Campus, Stockton Road, DH1 3LE, Durham, United Kingdom}
\email{amit.einav@durham.ac.uk}
\thanks{The research of MHD was supported by EPSRC Grants EP/V038516/1 and  EP/W008041/1.}
\begin{document}

\maketitle

\begin{abstract}
 In this work we study the convergence to equilibrium for a (potentially) degenerate nonlinear and nonlocal McKean-Vlasov equation. We show that the solution to this equation is related to the solution of a linear degenerate and/or defective Fokker-Planck equation and employ recent sharp convergence results to obtain an easily computable (and many times sharp) rates of convergence to equilibrium for the equation in question.
\end{abstract}

	{KEYWORDS.}
	McKean-Vlasov equation, Fokker-Planck equation, entropy, long time behaviour.
\\
{MSC.}
	35Q84 (Fokker-Planck equations), 35Q82 (PDEs in connection with statistical mechanics), 35H10 (Hypoelliptic equations), 35K10 (second order parabolic equations), 35B40 (Asymptotic behavior of solutions to PDEs).
	
	\tableofcontents

\section{Introduction}\label{sec:intro}
\subsection{Background}\label{subsec:background}
 
In this work we consider the following McKean-Vlasov type equation
\begin{equation}
	\label{eq:MKE}
	\partial_t \rho(x,t)=\div\rpa{\CC x \rho(x,t)+ \pa{\int_{\R^d} \KK\cdot (x-y)\rho(y,t)dy}\rho(x,t)+ \DD\nabla \rho(x,t)}, 
\end{equation}
where $t\in (0,\infty)$, $x\in \R^d$, and  $\CC, \KK, \DD$ are $d\times d$ constant matrices that correspond to the drift, interaction and diffusion phenomena of the system, respectively.

Equation \eqref{eq:MKE} is nonlinear and nonlocal in a typical way of McKean-Vlasov type of equations - that is,  via an interaction term which is presented as a convolution with a kernel given by $k(x,y)=\KK\cdot (x-y)$.  For simplicity, we will write $\KK x$ instead of $\KK \cdot x$ from this point onwards.

 McKean-Vlasov PDEs, originally proposed by McKean in his 1966 work \cite{McKean1966} where he explored connections between a wider class of Markov processes and non-linear parabolic equations, have been shown to have a strong connection to systems of interacting particle/agents by means of the so-called mean field limit approach. In particular, it can be shown that \eqref{eq:MKE} arises naturally as the mean-field limit of the following interacting particle system
\begin{equation}
	\label{eq:particle_systems}
	dX_i(t)=-\CC X_i(t)\,dt-\frac{1}{N} \sum_{j=1}^N \KK(X_i(t)-X_j(t))\,dt +\sqrt{2\DD}\,dW_i(t)
\end{equation}
with $i=1,\dots, N$ and where $\br{W_i(t)}_{i=1,\dots,N}$ are $d$-dimensional standard independent Wiener processes. Indeed, as can be seen in the classical works \cite{McKean1966,McKean1967,sznitman1991}, the particle system described by \eqref{eq:particle_systems} satisfies the \textit{propagation of chaos} property, i.e. as $N$ goes to infinity any fixed number of particles in the system become independent, and the limiting SDE equation that describes an average particle in \eqref{eq:particle_systems} is given by
\begin{equation}
	\label{eq:limiting_SDE}
	d\bar{X}(t)=-\CC \bar{X}(t)\,d-\KK(\bar{X}(t)-\mathbb{E}(\bar{X}(t))\,dt +\sqrt{2\DD}dW(t). 
\end{equation}
The law of this limiting particle is a weak solution to our McKean-Vlasov equation \eqref{eq:MKE}. 

Systems such as \eqref{eq:particle_systems}, describing the evolution of $N$ interacting particles where every particle moves under the influence of an external force, an interacting force (Curie-Weiss type interaction) and a stochastic noise, have many applications in physics, biology and social sciences. Such systems have been investigated intensively in the literature and their study remains a very active field. 
In particular, the McKean-Vlasov equation \eqref{eq:MKE} has been considered recently in \cite{duong2018,gomes2018} in the context of interacting diffusions driven by (non-Markovian) coloured noise.  For additional information on the topic we refer the reader to recent review works \cite{golse2016,jabin2017}. 

The goal of our presented work is to consider degenerate and defective McKean-Vlasov type of equations and utilise recent study of similar Fokker-Planck equation to conclude easily computable (and many times sharp) rates of convergence to equilibrium for \eqref{eq:MKE}.

\subsection{The setting of the problem}\label{subsec:setting}
We would like to start and emphasise that our main focus in this paper is the long time behaviour of the solution to \eqref{eq:MKE} and \textit{not} its existence, uniqueness and regularity. The well-posedness and smoothness of the solution to general McKean-Vlasov equations under certain restricted H{\"o}rmander conditions (which are slightly stronger than the assumptions of this work),  has been shown in \cite{antonelli2002} in the one dimensional case. For additional information and settings we refer the interested reader to \cite{Buckdahn2017,carrillo2003,crisan2018}. We will assume from this point onwards that the solution to our equation exists, is unique, and is smooth with finite first moment. We will, in fact, find an explicit and smooth solution to the problem under relatively mild conditions on the initial datum. Thus, in our setting, the uniqueness of solutions to \eqref{eq:MKE} will be enough (see Remark \ref{rem:a_solution=unique_solution} in \S\ref{sec:solving}).

The study of the long time behaviour for \eqref{eq:MKE} which we will present here is heavily motivated by (and relies on) recent works on the Fokker-Planck equation (which in our setting can be viewed as \eqref{eq:MKE} with no interaction term, i.e. $\KK=0$) such as \cite{arnold2014,MoH19} and \cite{arnold2018,arnold2022}. Due to the particular structure and type of McKean-Vlasov equation we consider here, we are able to allow for more degeneracies and find an explicit rate of convergence to the equilibrium of the system under our underline ``distance'' - the Boltzmann entropy. This will not only provide a different approach to this study than those found in recent work on more general (over-damped) McKean-Vlasov equations and Vlasov-Fokker-Planck equations (such as\cite{carrillo2003,guillin2021,Guillin2022,ren2021}), but will also explicitly show the impact of $\CC$, $\KK$, and $\DD$ on the convergence rate.

Following on the above, we present some important notions from the recent studies of Fokker-Planck equations which will allow us to state our main theorem.

\begin{definition}\label{def:entropy}
	The (relative) Boltzmann entropy of a non-negative measurable function $f:\R^d\to \R_+$ with respect to a positive measurable function $g:\R^d\to \R$, both of  which of unit mass, i.e.
	$$\int_{\R^d}f(x)dx=\int_{\R^d}g(x)dx=1,$$
	 is defined as
	$$\HH\pa{f|g} = \int_{\R^d}f(x)\log\pa{\frac{f(x)}{g(x)}}dx.$$
\end{definition}

\begin{remark}\label{rem:about_boltzmann_entropy}
	The Boltzmann entropy plays a fundamental role in many of the studies of convergence to equilibrium for various physically, biologically, and chemically motivated equations. While it is nonlinear and not a distance in nature, the Csisz{\'a}r-Kullback-Pinsker inequality states that for any unit mass $f$ and $g$ as above we have that
	\begin{equation}\nonumber
		\norm{f-g}_{L^1\pa{\R^d}} \leq \sqrt{2\HH\pa{f|g}}
	\end{equation}
	which reassures us that the entropy is enough to give us ``reasonable'' convergence.
	The requirement that both $f $ and $g$ will be of unit mass is very natural in numerous equations which describe the evolution of a probability density of a physical, biological, or chemical phenomenon. The McKean-Vlasov equation is slightly different due to the (quadratic) interaction term but, as we will mention shortly, we are always able to assume that our solution has a unit mass by scaling the interaction matrix $\KK$ and using the fact that mass is conserved under \eqref{eq:MKE} (see Remark \ref{rem:about_the_mass}).
\end{remark}

\begin{definition}\label{def:matrices_conditions}
	Let $\mathcal{A}$ and $\mathcal{B}$ be $d\times d$ matrices. We say that the pair $\pa{\mathcal{A},\mathcal{B}}$ is an admissible pair, or admissible in short, if:
	\begin{enumerate}[(A)]
			\item\label{item:cond_semipositive} $\mathcal{B}$ is positive semi-definite with
		$$1\leq \operatorname{rank}\pa{\mathcal{B}} \leq d.$$
		\item\label{item:cond_positive_stability} $\mathcal{A}$ is positively stable, i.e. all its eigenvalues have positive real part.
		\item\label{item:cond_no_invariant_subspace_to_kernel} There exists no non-trivial $\mathcal{A}^T$-invariant subspace of $\operatorname{ker}\pa{\mathcal{B}}$.
	\end{enumerate}
\end{definition}
It is worth to mention that condition \eqref{item:cond_no_invariant_subspace_to_kernel} in the above definition has been shown to be equivalent to H{\"o}rmander condition for the following operator \cite{arnold2014}
$$L_{\mathcal{A}, \mathcal{B}}f=\mathrm{div}(\mathcal{A} x f+\mathcal{B}\nabla f),$$
associated to the Fokker-Planck equation
$$\partial_t f(x,t)=\mathrm{div}(\mathcal{A} x f+\mathcal{B}\nabla f).$$
Condition \eqref{item:cond_no_invariant_subspace_to_kernel} guarantees that the dynamics can't be trapped in a direction where the diffusion is degenerate as the drift will push it away from this direction

In the subsequent analysis, we will relate the solution of the McKean-Vlasov equation  \eqref{eq:MKE} to that of the above Fokker-Planck equation with $\mathcal{A}$ being a linear combination of $\CC$ and $\KK$ and $\mathcal{B}=\DD$.

Lastly, we introduce a new notion for matrices that will prove to be very useful in our current study
\begin{definition}\label{def:almost_stable}
	We say that a square matrix $\mathcal{A}$ is \textit{almost-}positively stable if each of its eigenvalue satisfies
	\begin{enumerate}[(i)]
		\item Its real part is positive, or
		\item It is zero. In this case, the algebraic and geometric multiplicity of the zero eigenvalue are equal. 
	\end{enumerate}
\end{definition}

\begin{remark}\label{rem:on_projection}
	From the definition it is simple to see that if $\A$ is a $d\times d$ almost-positively stable matrix then its associated Jordan matrix, $\bm{J}$, must be of the form
	\begin{equation}\label{eq:jordan_for_almost_positively}
		\bm{J}=\begin{pmatrix}
			\bm{0}_{r\times r} & \bm{0}_{r\times (d-r)} \\
			\bm{0}_{(d-r)\times r} & \bm{J}^\prime_{(d-r)\times (d-r)}
		\end{pmatrix}
	\end{equation}	
	where $\bm{J}^\prime$ is, in general, a Jordan matrix with no zero entries in its diagonal. Consequently, denoting by $\br{v_1,\dots,v_d}$ the basis of generalised eigenvectors of $\A$ such that $\ker \A = \mathrm{span}\br{v_1,\dots,v_r}$ (which can be inferred from $\bm{J}$ and the appropriate similarity matrix) we see that the linear operator 
	\begin{equation}\label{eq:proj}
		P_{\ker\A}\pa{\sum_{i=1}^d \alpha_i v_i} = \sum_{i=1}^r \alpha_i v_i
	\end{equation}
	defines a projection on $\ker\A$\footnote{Indeed, $P_{\ker\A}^2=P_{\ker\A}$ and $P_{\ker\A}\vert_{\ker\A}=\II_{\ker\A}$.}. Moreover, the space 
	\begin{equation}\label{eq:def_of_V}
		V:=\mathrm{span}\pa{\II-P_{\ker\A}} = \mathrm{span}\br{v_{r+1},\dots,v_d}
	\end{equation}
	is an $\A-$ invariant subspace and the Jordan form associated to $\A\vert_{V}$ is given by $\bm{J}^\prime$. We can conclude from this that $A\vert_{V}$ is, in fact, positively stable. 
\end{remark}
With these definitions and remark at hand, we are ready to state our main theorem.


\begin{theorem}\label{thm:main}
		Consider the McKean-Vlasov equation \eqref{eq:MKE} with drift, interaction, and diffusion matrices $\CC$, $\KK$, and $\DD$ respectively. Assume that the pair $\pa{\CC+\KK, \DD}$ is admissible and that $\CC$ is almost-positively stable.
	
	Let $\rho(t)$ be the solution to the McKean-Vlasov equation with a unit mass initial datum $\rho_0\in L^1_+\pa{\R^d}$\footnote{$L^1_+\pa{\R^d}$ is the space of all non-negative $L^1\pa{\R^d}$ functions.} that has a finite first moment and for which the existence of a unique smooth solution to the problem is guaranteed. Denote by 
	\begin{equation}\label{eq:def_of_m_1}
		\bm{m}_1 := \int_{\R^d}x\rho_0(x)dx,
	\end{equation}
	\begin{equation}\label{eq:def_of_shift}
		s(t):=\pa{e^{-\CC t}-e^{-\pa{\CC+\KK}t}}\bm{m}_1
	\end{equation}
	and
	\begin{equation}\label{eq:equilibrium_state}
		\rho_\infty(x):=\frac{1}{\pa{2\pi \det \K}^{\frac{d}{2}}}e^{-\frac{1}{2}x^T\K x},
	\end{equation}
	with $\K$ being the  unique positive definite solution to the continuous Lyapunov equation
	\begin{equation}\nonumber
		2\DD=\pa{\CC+\KK} \K+\K \pa{\CC+\KK}^T.
	\end{equation} 
	Let
	\begin{equation}\label{eq:def_of_mu}
		\mu: = \min\br{\Re \lambda\;|\; \lambda\text{ is an eigenvalue of }\CC+\KK},
	\end{equation}
	and
	\begin{equation}\label{eq:def_of_nu}
		\nu:=\begin{cases}
			\min\br{\Re \lambda\;|\; \lambda\not=0\text{ is an eigenvalue of }\CC}, & \CC\not= 0,\\
			0, & \CC=0.
		\end{cases}
	\end{equation}
	Then $s(t)$ converges to $s_\infty:=P_{\ker \CC}\bm{m}_1$ and there exists an explicit constant $c>0$ that depends only on $\CC$, $\KK$, and $\DD$, such that
	\begin{equation}\label{eq:explicit_convergence}
		\begin{aligned}
			&\HH\pa{\rho(t)|\rho_\infty\pa{\cdot-s_{\infty}}} \leq c \HH\pa{\rho_0|\rho_\infty}\pa{1+t^{2n_1}}e^{-2\mu t}\\
			&+c\rpa{\pa{1+t^{n_2}}e^{-\nu t}\abs{\pa{\II-P_{\ker \CC}}\bm{m}_1}+\pa{1+t^{n_1}}e^{-\mu t}\abs{\bm{m}_1}}\\
			&\Big[\pa{1+\pa{1+t^{n_1}}e^{-\mu t}}\abs{P_{\ker \CC}\bm{m}_1} + \pa{1+t^{n_2}}e^{-\nu t}\abs{\pa{\II-P_{\ker\CC}}\bm{m}_1}\Big]
		\end{aligned}
	\end{equation}
	where $P_{\ker \CC}$ is the projection on the kernel of $\CC$, and $n_1$ and $n_2$ are the maximal defect of all eigenvalues associated to $\mu$ and $\nu$ respectively\footnote{ Recall that the defect of an eigenvalue is the difference between their algebraic and geometric multiplicity.}.
\end{theorem}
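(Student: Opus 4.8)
The plan is to reduce \eqref{eq:MKE} to a \emph{homogeneous} (possibly degenerate and defective) Fokker--Planck equation of the type studied in \cite{arnold2014,MoH19,arnold2018,arnold2022} by means of a time-dependent translation, invoke the known sharp hypocoercive decay there, and then transfer the estimate back to $\rho$. First I would use that $\KK$ is a constant matrix and $\rho(t)$ has unit mass, so that $\int_{\R^d}\KK(x-y)\rho(y,t)\,dy=\KK(x-\bm{m}(t))$ with $\bm{m}(t):=\int_{\R^d}y\,\rho(y,t)\,dy$. Testing \eqref{eq:MKE} against $x$ — all boundary terms vanishing by the assumed regularity and decay, and the quadratic interaction contributing nothing because $x-y$ is antisymmetric — gives $\dot{\bm{m}}=-\CC\bm{m}$, hence $\bm{m}(t)=e^{-\CC t}\bm{m}_1$. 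Substituting this back turns \eqref{eq:MKE} into the \emph{linear}, non-autonomous equation
\begin{equation}\label{plan:linear}
	\partial_t\rho = \div\rpa{(\CC+\KK)x\,\rho - \KK e^{-\CC t}\bm{m}_1\,\rho + \DD\nabla\rho}.
\end{equation}

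Next I would kill the forcing term $-\KK e^{-\CC t}\bm{m}_1$ by a translation: setting $\tilde\rho(x,t):=\rho(x+s(t),t)$ and expanding $\partial_t\tilde\rho$ via \eqref{plan:linear}, the drift and diffusion terms reorganise into $L_{\CC+\KK,\DD}\tilde\rho:=\div((\CC+\KK)x\,\tilde\rho+\DD\nabla\tilde\rho)$ up to the single first-order term $\rpa{\dot s(t)+(\CC+\KK)s(t)-\KK e^{-\CC t}\bm{m}_1}\cdot(\nabla\rho)(x+s(t),t)$. A direct check shows that $s(t)$ as defined in \eqref{eq:def_of_shift} is precisely the solution of $\dot s+(\CC+\KK)s=\KK e^{-\CC t}\bm{m}_1$ with $s(0)=0$, so this term drops out and $\tilde\rho$ solves $\partial_t\tilde\rho=L_{\CC+\KK,\DD}\tilde\rho$ with $\tilde\rho(\cdot,0)=\rho_0$. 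Since $(\CC+\KK,\DD)$ is admissible, the equilibrium of $L_{\CC+\KK,\DD}$ is exactly $\rho_\infty$ from \eqref{eq:equilibrium_state}, and the sharp convergence results of \cite{arnold2014} together with their refinements \cite{MoH19,arnold2018,arnold2022} yield, with $\mu$, $n_1$ as in the statement and $c$ depending only on $\CC,\KK,\DD$,
\begin{equation}\label{plan:fp}
	\HH\pa{\tilde\rho(t)|\rho_\infty}\ \leq\ c\,(1+t^{2n_1})\,e^{-2\mu t}\,\HH\pa{\rho_0|\rho_\infty}.
\end{equation}

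It then remains to transfer \eqref{plan:fp} back to $\rho$. Writing $\delta(t):=s(t)-s_\infty$ and using $\rho(x,t)=\tilde\rho(x-s(t),t)$, a change of variables gives $\HH\pa{\rho(t)|\rho_\infty\pa{\cdot-s_\infty}}=\HH\pa{\tilde\rho(t)|\rho_\infty\pa{\cdot+\delta(t)}}$; since $\log\rho_\infty$ is quadratic with Hessian $-\K$, one computes $\HH\pa{\tilde\rho(t)|\rho_\infty\pa{\cdot+\delta}}=\HH\pa{\tilde\rho(t)|\rho_\infty}+\tilde{\bm{m}}(t)^T\K\delta+\tfrac12\delta^T\K\delta$, where $\tilde{\bm{m}}(t):=\int_{\R^d}y\,\tilde\rho(y,t)\,dy=e^{-(\CC+\KK)t}\bm{m}_1$ is the mean of $\tilde\rho(t)$ (computed as in the first step). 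The crucial identity is $\tilde{\bm{m}}(t)+\delta(t)=\bm{m}(t)-s_\infty=e^{-\CC t}\bm{m}_1-P_{\ker\CC}\bm{m}_1=e^{-\CC t}(\II-P_{\ker\CC})\bm{m}_1=:w(t)$, using that $e^{-\CC t}$ fixes $\ker\CC$ pointwise and commutes with $P_{\ker\CC}$; substituting $\tilde{\bm{m}}=w-\delta$ collapses the correction to $w(t)^T\K\delta(t)-\tfrac12\delta(t)^T\K\delta(t)\leq w(t)^T\K\delta(t)\leq\norm{\K}\,\abs{w(t)}\,\abs{\delta(t)}$ (the subtracted term is non-negative as $\K$ is positive definite). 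Here I use that $\CC$ is almost-positively stable (so that $P_{\ker\CC}$ is as in Remark \ref{rem:on_projection} and $\CC$ restricted to $V=\range(\II-P_{\ker\CC})$ is positively stable with spectral abscissa $\nu$ and maximal defect $n_2$), which gives $\abs{w(t)}\leq c(1+t^{n_2})e^{-\nu t}\abs{(\II-P_{\ker\CC})\bm{m}_1}$; likewise $\abs{\tilde{\bm{m}}(t)}\leq c(1+t^{n_1})e^{-\mu t}\abs{\bm{m}_1}$ by positive stability of $\CC+\KK$, so $\abs{\delta(t)}\leq\abs{w(t)}+\abs{\tilde{\bm{m}}(t)}$ is bounded by the first bracketed factor on the second line of \eqref{eq:explicit_convergence}, and in particular $\delta(t)\to0$, i.e.\ $s(t)\to s_\infty$. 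Since $\abs{w(t)}$ is itself dominated by (a constant times) the second bracketed factor of \eqref{eq:explicit_convergence}, combining all of the above with \eqref{plan:fp} produces \eqref{eq:explicit_convergence}.

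The argument is in essence a reduction, so no single estimate is genuinely hard; the delicate points, in order of importance, are: (i) choosing the translation $s(t)$ so that the non-autonomous forcing in \eqref{plan:linear} cancels \emph{exactly} — this is the conceptual key and is what forces the formula \eqref{eq:def_of_shift}; (ii) the bookkeeping of the three curves $\bm{m}(t)=e^{-\CC t}\bm{m}_1$, $\tilde{\bm{m}}(t)=e^{-(\CC+\KK)t}\bm{m}_1$ and $s(t)$, in particular the identity $\tilde{\bm{m}}(t)+\delta(t)=e^{-\CC t}(\II-P_{\ker\CC})\bm{m}_1$, which is precisely what produces the $P_{\ker\CC}$-splitting visible in \eqref{eq:explicit_convergence}; and (iii) quoting the sharp decay estimate for $L_{\CC+\KK,\DD}$ in the correct polynomial-times-exponential form from the references. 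The analytic manipulations in the first three steps (the moment identity, the change of variables, the vanishing of boundary terms) are justified by the regularity assumed in the statement, equivalently by the explicit solution constructed in \S\ref{sec:solving}; I would also remark that \eqref{eq:explicit_convergence} is non-trivial only when $\HH\pa{\rho_0|\rho_\infty}<\infty$, in which case the right-hand side is finite and hence so is the left-hand side by \eqref{plan:fp}.
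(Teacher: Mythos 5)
Your proposal is correct and follows essentially the same route as the paper: compute the first moment $\bm m(t)=e^{-\CC t}\bm m_1$, translate by $s(t)$ to reduce \eqref{eq:MKE} to the homogeneous Fokker--Planck equation $\partial_t\tilde\rho=\div((\CC+\KK)x\tilde\rho+\DD\nabla\tilde\rho)$, quote Monmarch\'e's sharp decay for $\HH(\tilde\rho(t)|\rho_\infty)$, and then account for the discrepancy between $\rho_\infty(\cdot-s(t))$ and $\rho_\infty(\cdot-s_\infty)$ using the explicit Gaussian form of $\rho_\infty$ together with the decay of $e^{-\CC t}$, $e^{-(\CC+\KK)t}$ through the Jordan structure (the content of Lemma~\ref{lem:convergence_of_s}). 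The one genuine difference is in the last step: the paper writes the entropy correction centered at $\rho(t)$, obtaining the term $\bm m(t)^T\K\delta-\tfrac12\delta^T\K(s(t)+s_\infty)$ and then bounding every piece in absolute value, which introduces a $\tfrac{\kappa}{2}|\delta|^2$ contribution; you instead center at $\tilde\rho(t)$, obtain $\tilde{\bm m}(t)^T\K\delta+\tfrac12\delta^T\K\delta$, and substitute $\tilde{\bm m}(t)=w(t)-\delta(t)$ with $w(t)=e^{-\CC t}(\II-P_{\ker\CC})\bm m_1$, which collapses the quadratic term to $-\tfrac12\delta^T\K\delta\le0$ and yields the cleaner bound $\HH(\tilde\rho(t)|\rho_\infty)+\norm{\K}\,|w(t)|\,|\delta(t)|$. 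This is algebraically equivalent to the paper's identity (as it must be), but your grouping is slightly tighter: it shows the second bracket in \eqref{eq:explicit_convergence} can in fact be replaced by $c(1+t^{n_2})e^{-\nu t}|(\II-P_{\ker\CC})\bm m_1|$ alone, dropping the $|P_{\ker\CC}\bm m_1|$ contribution entirely. You are also more explicit than the paper about the fact that almost-positive stability of $\CC$ is implicitly required for $s(t)$ to converge at all (otherwise $\nu<0$ and the claimed decay fails), which is worth making precise in the statement.
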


The result above here can be compared to the work of Villani \cite{villani98} where the author has shown that under certain conditions the isotropic and spatially homogeneous Landu equation for Maxwellian molecules can be recast as a Fokker-Planck equation.

\begin{remark}\label{rem:about_the_mass}
	The keen reader would notice that we have required a unit mass initial datum $\rho_0$. It is immediate to see that under the assumption of the existence of a smooth solution, $\rho(t)$, the mass is a conserved quantity\footnote{Indeed
	\begin{equation}\nonumber
			\begin{gathered}
					\frac{d}{dt}m_0(t) 
					=\int_{\R^d} \div\rpa{\CC x \rho(x,t)+ \pa{\int_{\R^d} \KK(x-y)\rho(y,t)dy}\rho(x,t)+ \DD\nabla \rho(x,t)}dx=0.
				\end{gathered}
		\end{equation}}, i.e.
	$$m_0(t):=\int_{\R^d}\rho(x,t)dx = \int_{\R^d}\rho_0(x)dx=m_0.$$
	The McKean-Vlasov equation, \eqref{eq:MKE}, is not invariant under mass scaling. This is due to the interaction term. Replacing $\rho(t)$ by $\widetilde{\rho}(t) = \rho(t)/m_0$, however, yields a \textit{unit mass} solution to the McKean-Vlasov equation 
	\begin{equation}\nonumber
		\partial_t \widetilde{\rho}(x,t)=\div\rpa{\CC x \widetilde{\rho}(x,t)+ \pa{\int_{\R^d} \pa{m_0\KK}(x-y)\widetilde{\rho}(y,t)dy}\widetilde{\rho}(x,t)+ \DD\nabla \widetilde{\rho}(x,t)}, 
	\end{equation}
	which is governed by the matrices $\CC$, $m_0\KK$, and $\DD$. We can thus apply our main theorem in this case as long as the associated matrices satisfy the required conditions. 
\end{remark}
\begin{remark}\label{rem:ugly_expression}
	The reader might notice that our expression for the rate of convergence of the solution under the Boltzmann entropy functional, inequality \eqref{eq:explicit_convergence}, seems quite cumbersome. The reason we kept it as such and didn't estimate it more crudely is to not only get a better explicit bound, but also to showcase the impact of the drift matrix $\CC$ on the result. In particular, if $\ker \CC = \br{0}$ we get a much improved rate of convergence and we find that $s_\infty=P_{\ker\CC}\bm{m}_1=0$, i.e. we find that the equilibrium to \eqref{eq:MKE} is the same as that for the Fokker-Planck equation with drift matrix $(\CC+\KK)$ and diffusion matrix $\DD$.
\end{remark}

\subsection{Organisation of the paper}\label{subsec:organisation}
We start the body of our work finding an explicit solution to our equation \eqref{eq:MKE} in \S\ref{sec:solving}, relying on known results from the study of Fokker-Planck equations. \S\ref{sec:convergence} will be dedicated to showing the main result of our paper followed by a short discussion about our results and future research in \S\ref{sec:final}.

\section{Solving the McKean-Vlasov equation}\label{sec:solving}

The goal of this section is to provide an explicit solution to our McKean-Vlasov equation \eqref{eq:MKE}. 

Our main result for this section is:
\begin{theorem}\label{thm:solution_to_MKE}
	Consider the McKean-Vlasov equation \eqref{eq:MKE} with drift, interaction, and diffusion matrices $\CC$, $\KK$, and $\DD$ respectively. Assume that the pair $\pa{\CC+\KK,\DD}$ satisfies condition \eqref{item:cond_no_invariant_subspace_to_kernel} of Definition \ref{def:matrices_conditions}, i.e. that there is no $\pa{\CC+\KK}^T$-invariant subspace of $\ker \DD$. Then, a unit mass solution to the equation with initial datum $\rho_0\in L^1_+\pa{\R^d}\cap L^p\pa{\R^d}$, where $p\in (1,\infty]$, is given by
	\begin{equation}\label{eq:solution_MKE}
		\rho(x,t) = f^{\CC+\KK,\DD}_{FP}\pa{x-s(t),t},
	\end{equation}
	with $f^{\mathcal{A},\mathcal{B}}_{FP}\pa{x,t}$ being the (unique) smooth solution to the Fokker-Planck equation  
	\begin{equation}
	\label{eq: FPE}
	\partial_t f(x,t)=\div\rpa{\mathcal{A}x f(x,t)+  \mathcal{B}\nabla f(x,t)}
	\end{equation}
	where $t\in (0,\infty)$, $x\in \R^d$, $f(x,0)=\rho_0(x)$, and $s(t)$ is defined as \eqref{eq:def_of_shift}. 
\end{theorem}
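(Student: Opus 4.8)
The plan is to exploit the fact that the only nonlinearity in \eqref{eq:MKE} enters through the nonlocal convolution term and that the kernel $\KK$ is \emph{linear}. For any unit-mass density $\rho(\cdot,t)$ this term collapses to an affine drift,
\[
\int_{\R^d}\KK(x-y)\rho(y,t)\,dy = \KK x\int_{\R^d}\rho(y,t)\,dy - \KK\int_{\R^d}y\,\rho(y,t)\,dy = \KK x - \KK\bm{m}_1(t),
\]
where $\bm{m}_1(t):=\int_{\R^d}y\,\rho(y,t)\,dy$; hence, along a solution, \eqref{eq:MKE} is nothing but the \emph{linear, inhomogeneous} Fokker-Planck equation $\partial_t\rho=\div\rpa{\pa{\CC+\KK}x\,\rho-\KK\bm{m}_1(t)\,\rho+\DD\nabla\rho}$, the only ``forcing'' being the spatially constant, time-dependent drift $-\KK\bm{m}_1(t)$. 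A drift of this type is absorbed by a deterministic translation of the space variable, which is precisely what the ansatz \eqref{eq:solution_MKE} does. Concretely, I would (a) compute $\bm{m}_1(t)$, (b) show that the translated profile $x\mapsto f^{\CC+\KK,\DD}_{FP}(x-s(t),t)$ solves \eqref{eq:MKE} exactly when $s$ satisfies a linear ODE, and (c) identify the solution of that ODE with \eqref{eq:def_of_shift}.

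For (a) I would fix $\rho(x,t):=f^{\CC+\KK,\DD}_{FP}(x-s(t),t)$ with $s$ \emph{given} by \eqref{eq:def_of_shift}, and then verify all required properties a posteriori. The existence, smoothness and mass conservation of $f^{\CC+\KK,\DD}_{FP}$ (as well as the propagation of the finite first moment of $\rho_0$, which is what makes $\bm{m}_1$ and $s$ meaningful) are supplied by the hypoellipticity condition \eqref{item:cond_no_invariant_subspace_to_kernel} of Definition~\ref{def:matrices_conditions}, its fundamental solution being an explicit non-degenerate Gaussian for $t>0$. Multiplying \eqref{eq: FPE} by the coordinate functions and integrating by parts, the boundary terms vanishing by the Gaussian decay, gives $\frac{d}{dt}\int_{\R^d}y\,f^{\CC+\KK,\DD}_{FP}(y,t)\,dy=-\pa{\CC+\KK}\int_{\R^d}y\,f^{\CC+\KK,\DD}_{FP}(y,t)\,dy$, hence $\int_{\R^d}y\,f^{\CC+\KK,\DD}_{FP}(y,t)\,dy=e^{-\pa{\CC+\KK}t}\bm{m}_1$; combining with the translation, $\bm{m}_1(t)=e^{-\pa{\CC+\KK}t}\bm{m}_1+s(t)$, which by the explicit form \eqref{eq:def_of_shift} simplifies to $e^{-\CC t}\bm{m}_1$. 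So the drift to be reproduced is $-\KK e^{-\CC t}\bm{m}_1$.

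For (b)--(c) I would change variables $y=x-s(t)$: writing $f_{FP}:=f^{\CC+\KK,\DD}_{FP}$, the left-hand side of \eqref{eq:MKE} becomes $\partial_t f_{FP}(y,t)-\dot s(t)\cdot\nabla_y f_{FP}(y,t)$, while the right-hand side becomes $\div_y\rpa{\pa{\CC+\KK}y\,f_{FP}+\DD\nabla_y f_{FP}}+\pa{\pa{\CC+\KK}s(t)-\KK e^{-\CC t}\bm{m}_1}\cdot\nabla_y f_{FP}$, since the extra constant vector passes through the divergence. Because $f_{FP}$ solves \eqref{eq: FPE}, the divergence terms cancel and the identity reduces to the linear ODE $\dot s(t)+\pa{\CC+\KK}s(t)=\KK e^{-\CC t}\bm{m}_1$ with $s(0)=0$. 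A one-line differentiation, using that $\CC+\KK$ commutes with $e^{-\pa{\CC+\KK}t}$, confirms that $s(t)=\pa{e^{-\CC t}-e^{-\pa{\CC+\KK}t}}\bm{m}_1$ solves it, and by uniqueness for linear ODEs this is the translation. Finally $\rho(x,0)=f_{FP}(x,0)=\rho_0(x)$ because $s(0)=0$, and $\int_{\R^d}\rho(x,t)\,dx=\int_{\R^d}f_{FP}(y,t)\,dy=1$ by mass conservation, so \eqref{eq:solution_MKE} is the claimed unit-mass solution (and then the unique one, in view of Remark~\ref{rem:a_solution=unique_solution}).

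I expect the genuine difficulties to be analytic rather than algebraic. Justifying the integrations by parts and the first-moment computation requires knowing that $f^{\CC+\KK,\DD}_{FP}(\cdot,t)$ and its first derivatives decay fast enough at infinity and that the first moment is finite and evolves as stated --- precisely where the imported Fokker-Planck theory (the Gaussian fundamental solution, hypoellipticity under \eqref{item:cond_no_invariant_subspace_to_kernel}) does its work, and where the hypotheses $\rho_0\in L^1_+\pa{\R^d}\cap L^p\pa{\R^d}$, $p\in(1,\infty]$, and finite first moment of $\rho_0$ are consumed. The one genuinely nonlinear point is the self-consistency in step (a): the translation $s$ is built from $\bm{m}_1=\bm{m}_1(0)$, so one must check that the first moment of the resulting $\rho(\cdot,t)$ indeed equals $e^{-\CC t}\bm{m}_1$, i.e. that the interaction term regenerates exactly the drift used to define $s$ --- a loop that closes thanks to the identity for $s(t)$ verified in (c).
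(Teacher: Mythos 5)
Your proof is correct and is essentially the same argument as the paper's, just organized as a direct verification of the ansatz (which is the content the paper defers to its Appendix) rather than the paper's ``assume a solution exists, derive its first moment, change variables, then plug back in.'' In both cases the three key algebraic ingredients are identical: the observation that for a unit-mass density the interaction term collapses to the affine drift $\KK x-\KK\bm{m}_1(t)$; the fact that the first moment of the Fokker--Planck profile satisfies $\dot{\bm m}_1^{FP}=-(\CC+\KK)\bm m_1^{FP}$, so that after the translation $\bm m_1(t)=e^{-\CC t}\bm m_1$; and the linear ODE $\dot s+(\CC+\KK)s=\KK e^{-\CC t}\bm m_1$, $s(0)=0$, whose solution is \eqref{eq:def_of_shift}. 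The only small difference is that you obtain the first moment of $f^{\CC+\KK,\DD}_{FP}$ by multiplying \eqref{eq: FPE} by the coordinate functions and integrating by parts, whereas the paper's Appendix reads it off the explicit Gaussian formula \eqref{eq:FP_form}; both are legitimate, and yours avoids invoking the explicit kernel. You also correctly flag (and correctly dismiss) the apparent circularity in step (a): the self-consistency closes because, once the ODE for $s$ is verified, the first moment of the constructed $\rho$ is forced to equal $e^{-\CC t}\bm m_1$, which is exactly the drift used to build $s$.
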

\begin{remark}\label{rem:a_solution=unique_solution}
	Theorem \ref{thm:solution_to_MKE} will guarantee that we have an explicit expression to our solution to \eqref{eq:MKE} as long as we know that $\rho_0\in L^1_+\pa{\R^d}\cap L^p\pa{\R^d}$ and that the solution is unique. 
\end{remark}

\begin{remark}\label{rem:mass_again}
	Much like we've noted in Remark \ref{rem:about_the_mass}, the above theorem can easily be modified for the case we are considering a general solution to the problem by replacing the matrix $\KK$ with $m_0\KK$, where $m_0$ is the mass of the solution. 
\end{remark}

\begin{proof}[Proof of Theorem \ref{thm:solution_to_MKE}]
	To emphasise the point that the mass doesn't matter, we will provide the general proof of this theorem, i.e. we assume that $m_0>0$ is arbitrary.
	
	We start by assuming that our solution is smooth and decays fast enough and rewrite \eqref{eq:MKE} as
	\begin{equation}\label{eq:MKE_simplified_slightly}
		\begin{aligned}
				\partial_t \rho(x,t)&=\div\Big[\CC x \rho(x,t)+ \KK x\underbrace{\pa{\int_{\R^d}\rho(y,t)dy}}_{=m_0}\rho(x,t) \\
				&- \KK\pa{\int_{\R^d} y\rho(y,t)dy}\rho(x,t)+ \DD\nabla \rho(x,t)\Big].
		\end{aligned}
	\end{equation}
	This motivates us to explore the momentum of our solution 
	$$\bm{m}_1(t) := \int_{\R^d} x\rho(x,t)dx.$$
	
	Using \eqref{eq:MKE_simplified_slightly} together with the conservation of mass, we see that for any $i=1,\dots,d$
	$$\frac{d}{dt}m_{1,i}(t) = \int_{\R^d}x_i \partial_t \rho(x,t)dx $$
	$$=\int_{\R^d}x_i\div\rpa{\pa{\CC+m_0\KK}x \rho(x,t)- \pa{\int_{\R^d}\pa{\KK y}\rho(y,t)dy}\rho(x,t)+ \DD\nabla \rho(x,t)}dx$$
	$$= - \int_{\R^d} \pa{\pa{\CC+m_0\KK}x}_{i} \rho(x,t)dx +  \pa{\int_{\R^d}\pa{\KK y}_i\rho(y,t)dy}\underbrace{\int_{\R^d}\rho(x,t)dx}_{=m_0} - \sum_{j=1}^d d_{i,j} \int_{\R^d}\partial_{x_j}\rho(x,t)dx $$
	$$= - \int_{\R^d} \pa{\CC x}_{i} \rho(x,t)dx=-\sum_{j=1}^dc_{i,j}\int_{\R^d}x_j\rho(x,t)dx = -\pa{\CC\bm{m}_1(t)}_i.$$
	We conclude that 
	$$\frac{d}{dt}\bm{m}_1(t) = -\CC\bm{m}_1(t)$$
	and consequently,
	\begin{equation}\label{eq:evolution_of_first_moment}
		\bm{m}_1(t) = e^{-\CC t}\bm{m}_1(0):=e^{-\CC t}\bm{m}_1.
	\end{equation}
	The above allows us to rewrite \eqref{eq:MKE_simplified_slightly}, and as a result $\eqref{eq:MKE}$, as
	\begin{equation}\label{eq:MKE_pre_final_version}
		\partial_t \rho(x,t)=\div\rpa{\pa{\CC+ m_0\KK}x\rho(x,t) - \KK e^{-\CC t}\bm{m}_1\rho(x,t)+ \DD\nabla \rho(x,t)}.
	\end{equation}
	which could be recast as a Fokker-Planck equation with a ``transport part''
	\begin{equation}\label{eq:MKE_final_version}
		\partial_t \rho(x,t)+\KK e^{-\CC t}\bm{m}_1\cdot\nabla \rho(x,t)=\div\rpa{\pa{\CC+ m_0\KK}x\rho(x,t) + \DD\nabla \rho(x,t)}.
	\end{equation}
	Defining 
	$$\widetilde{\rho}(x,t): = \rho(x+s(t),t)$$
	for a function $s(t)$ which we will soon find, and denoting $y(x,t)=x+s(t)$ we find that
	$$\partial_t \widetilde{\rho}(x,t) = \partial_t \rho(y(x,t),t) + \nabla_y \rho (y(x,t),t)\cdot \frac{ds}{dt}(t)$$
	$$=\partial_t \rho(y(x,t),t) + \KK e^{-\CC t}\bm{m}_1\cdot \nabla_y \rho (y(x,t),t) +\pa{\frac{ds}{dt}(t)-\KK e^{-\CC t}\bm{m}_1}\nabla_y \rho (y(x,t),t)$$
	$$=\div_y\rpa{\pa{\CC+ m_0\KK}y\pa{x,t}\rho(y(x,t),t) + \DD\nabla_y \rho(y(x,t),t)}+\pa{\frac{ds}{dt}(t)-\KK e^{-\CC t}\bm{m}_1}\cdot\nabla_y \rho (y(x,t),t)$$
	$$=\div_x\rpa{\pa{\CC+ m_0\KK}x\widetilde{\rho}(x,t)+ \DD\nabla_x\widetilde{\rho}(x,t)}+\pa{\frac{ds}{dt}(t)-\KK e^{-\CC t}\bm{m}_1+\pa{\CC+m_0\KK}s(t)}\cdot\nabla_y \rho (y(x,t),t).$$
	Thus, if 
	\begin{equation}\label{eq:shift_eq}
		\frac{ds}{dt}(t)=\KK e^{-\CC t}\bm{m}_1-\pa{\CC+m_0\KK}s(t)
	\end{equation}
	we conclude that $\widetilde{\rho}(x,t)$ solved the Fokker-Planck equation \eqref{eq: FPE} with drift matrix $\mathcal{A}=\CC+m_0\KK$ and diffusion matrix $\mathcal{B}=\DD$.
	
	It is easy to verify that a general solution to  \eqref{eq:shift_eq} is given by
	$$s_G(t) = \frac{1}{m_0}\pa{e^{-\CC t}-e^{-\pa{\CC+m_0\KK}t}}\bm{m}_1+e^{-\pa{\CC+m_0\KK}t}s_G(0)$$

	As was discussed in \cite{arnold2014}, the conditions on the pair $\pa{\CC+m_0\KK,\DD}$ guarantee the existence of a unique smooth solution to the Fokker-Planck equation 
	\begin{equation}\label{eq:FP_intermittent}
		\partial_t\widetilde{\rho}(x,t)=\div_x\rpa{\pa{\CC+ m_0\KK}x\widetilde{\rho}(x,t)+ \DD\nabla_x\widetilde{\rho}(x,t)} 
	\end{equation}
	as long as the initial datum is in $L^1_+\pa{\R^d}\cap L^p\pa{\R^d}$ for some $p\in (1,\infty]$. We conclude that 
		\begin{equation}\label{eq:sol}
		\rho_G(x,t) = \widetilde{\rho}(x-s_G(t),t)=f_{FP}^{\CC+m_0\KK,\DD}\pa{x-s_G(t),t}
	\end{equation}
	is a candidate for a smooth and strongly decaying solution to \eqref{eq:MKE}.  By choosing $s_G(0)=0$, i.e. considering the function  
	$$\rho(x,t) =f_{FP}^{\CC+m_0\KK,\DD}\pa{x-s(t),t}$$
	where 
		\begin{equation}\label{eq:general_s}
		s(t)=\frac{1}{m_0}\pa{e^{-\CC t}-e^{-\pa{\CC+m_0\KK}t}}\bm{m}_1,
		\end{equation}
 we see that
	$$\widetilde{\rho}(x,0)=\rho(x+s(0),0)=\rho(x,0)=\rho_0(x)\in L^1_+\pa{\R^d}\cap L^p\pa{\R^d}$$
	and consequently have managed to find a potential solution. Plugging this function back in \eqref{eq:MKE} completes the proof.
\end{proof}

\begin{remark}\label{rem: explicit solution of FPE}
The verification that \eqref{eq:sol} is indeed a solution to \eqref{eq:MKE} relies on the explicit form of $f_{FP}^{\CC+m_0\KK,\DD}$. It was shown in \cite{arnold2014}, the solution of the Fokker Planck equation \eqref{eq: FPE} when the pair $\pa{\A,\B}$ is admissible is given explicitly by
\begin{equation}\label{eq:FP_form}
f_{FP}^{\A,\B}(x,t)=\frac{1}{\pa{2\pi \det \mathcal{Q}(t)}^\frac{d}{2}}\int_{\R^d}e^{-\frac{1}{2}(x-e^{-\mathcal{A}t}y)^T \mathcal{Q}(t)^{-1}(x-e^{-\mathcal{A}t}y)}\rho_0(y)\,dy,
\end{equation}
where the convolution kernel $\mathcal{Q}(t)$ is given by
\begin{equation}\nonumber
\mathcal{Q}(t):=2 \int_{0}^t e^{-\mathcal{A}s}\mathcal{B} e^{-\mathcal{A}^T s}ds.	
\end{equation}
We leave the proof to the Appendix as it is a straightforward calculation.
\end{remark}

With an explicit expression of the solution to the McKean-Vlasov equation, one that is intimately connected to a Fokker-Planck equation, we can start considering the question of convergence to equilibrium.

\section{About the convergence to equilibrium}\label{sec:convergence}

The main result shown in the previous section - namely the fact that a solution to our McKean-Vlasov equation \eqref{eq:MKE} is nothing but a ``transported'' solution to an appropriate Fokker-Planck equation - allows us to utilise recent sharp results about convergence to equilibrium in the setting of Fokker-Planck equations, in particular those achieved in the work of Monmarch{\'e} \cite{MoH19}, to conclude explicit and quantitative convergence to equilibrium in \eqref{eq:MKE}

We begin our section by quoting the main convergence to equilibrium result from \cite{MoH19}:

\begin{theorem}\label{thm:monmarche}
	Consider the Fokker-Planck equation 
	\begin{equation}\nonumber
		\partial_t f(x,t)=\div\rpa{\A x f(x,t)+  \B\nabla f(x,t)}
	\end{equation}
	where the pair $\pa{\A,\B}$ is admissible. Let $f(\cdot,t)$ be the solution to the above equation with a unit mass initial datum $f_0\in L^1_+\pa{\R^d}\cap L^p\pa{\R^d}$ for some $p\in (1,\infty]$ and let 	
	\begin{equation}\nonumber
		\eta: = \min\br{\Re \lambda\;|\; \lambda\text{ is an eigenvalue of }\A}.
	\end{equation}
	Then, there exists an explicit $c>0$ that only depends on $\A$ and $\B$ such that
	\begin{equation}\nonumber
		\HH\pa{f(t)|f_\infty} \leq c \HH\pa{f_0|f_\infty}\pa{1+t^{2n}}e^{-2\eta t}
	\end{equation}
	where $n$ is the maximal defect of all eigenvalues associated to $\eta$ and where
	\begin{equation}\nonumber
		f_\infty(x):=\frac{1}{\pa{2\pi \det \mathcal{Q}}^{\frac{d}{2}}}e^{-\frac{1}{2}x^T\mathcal{Q}x},
	\end{equation}
	with $\mathcal{Q}$ being the unique positive definite solution to 
	\begin{equation}\nonumber
		2\B=\A\mathcal{Q}+\mathcal{Q} \A^T.
	\end{equation} 
\end{theorem}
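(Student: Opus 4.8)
The plan is to sidestep any direct hypocoercivity or entropy--dissipation computation and instead exploit the explicit representation of the solution recorded in Remark~\ref{rem: explicit solution of FPE}. Write $T_t$ for the linear map $x\mapsto e^{-\A t}x$; since $\A$ is positively stable this is a bijection for every $t$ and $e^{-\A t}\to 0$ as $t\to\infty$. The key algebraic fact is the identity
$$\mathcal{Q}=\mathcal{Q}(t)+e^{-\A t}\,\mathcal{Q}\,e^{-\A^T t},\qquad t\ge 0,$$
which follows by splitting $\mathcal{Q}=2\int_0^\infty e^{-\A s}\B e^{-\A^T s}\,ds$ at time $t$ and recognising the tail as $e^{-\A t}\mathcal{Q}e^{-\A^T t}$. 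Combining this with \eqref{eq:FP_form} shows that $f(\cdot,t)$ and $f_\infty$ are convolutions of \emph{the same} centred Gaussian $g_t$ of covariance $\mathcal{Q}(t)$ with the $T_t$-pushforwards of $f_0$ and of $f_\infty$ respectively; in particular $\HH\pa{f(t)|f_\infty}$ is genuinely defined since admissibility of $(\A,\B)$ — equivalently Hörmander's condition, via \eqref{item:cond_no_invariant_subspace_to_kernel} — forces $\mathcal{Q}(t)>0$ for every $t>0$.

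Next I would establish a smoothing estimate for $t$ away from the origin. Fix an optimal quadratic coupling $\pi$ of $f_0$ and $f_\infty$, so that $f(\cdot,t)=\int g_t\pa{\cdot-e^{-\A t}x}\,d\pi(x,y)$ and $f_\infty=\int g_t\pa{\cdot-e^{-\A t}y}\,d\pi(x,y)$. Joint convexity of the relative entropy together with the elementary identity $\HH\pa{N(a,\mathcal{Q}(t))\,|\,N(b,\mathcal{Q}(t))}=\tfrac12(a-b)^T\mathcal{Q}(t)^{-1}(a-b)$ yields
$$\HH\pa{f(t)|f_\infty}\ \le\ \tfrac12\int\pa{e^{-\A t}(x-y)}^T\mathcal{Q}(t)^{-1}\pa{e^{-\A t}(x-y)}\,d\pi(x,y)\ \le\ \tfrac12\,\norm{\mathcal{Q}(t)^{-1}}\,\norm{e^{-\A t}}^2\,W_2(f_0,f_\infty)^2.$$
I would then invoke three standard facts, each feeding into the final constant: (i) $t\mapsto\mathcal{Q}(t)$ is non-decreasing in the Loewner order, so $\norm{\mathcal{Q}(t)^{-1}}\le\norm{\mathcal{Q}(1)^{-1}}$ for all $t\ge1$; (ii) the matrix-exponential bound $\norm{e^{-\A t}}\le C_\A\pa{1+t^{n}}e^{-\eta t}$ with $n$ the maximal defect among the eigenvalues of $\A$ of real part $\eta$; (iii) Talagrand's transport inequality for the Gaussian $f_\infty$, namely $W_2(f_0,f_\infty)^2\le 2\norm{\mathcal{Q}}\,\HH\pa{f_0|f_\infty}$ (the desired estimate being vacuous if the right-hand side is infinite). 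Together these give $\HH\pa{f(t)|f_\infty}\le c_1\,\HH\pa{f_0|f_\infty}\pa{1+t^{2n}}e^{-2\eta t}$ for all $t\ge1$, with $c_1$ depending only on $\A$ and $\B$.

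For $0\le t\le1$ I would simply use monotonicity: since the Fokker--Planck flow is a Markov kernel leaving $f_\infty$ invariant (equivalently, apply joint convexity to the representation above over the time-$t$ transition), $t\mapsto\HH\pa{f(t)|f_\infty}$ is non-increasing and hence $\le\HH\pa{f_0|f_\infty}$ on $[0,1]$, while $\pa{1+t^{2n}}e^{-2\eta t}$ is bounded below there by a positive constant $c_2=c_2(\A)$. Taking $c=\max\br{c_1,1/c_2}$ then closes the argument on all of $[0,\infty)$, with the claimed dependence of $c$ on $\A$ and $\B$ only.

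The one delicate point, and the reason for splitting at $t=1$, is the behaviour of $\mathcal{Q}(t)^{-1}$ near $t=0$: when $\B$ is rank-deficient, $\norm{\mathcal{Q}(t)^{-1}}$ blows up polynomially as $t\to0^+$ at a rate fixed by the Hörmander step in \eqref{item:cond_no_invariant_subspace_to_kernel}, so the smoothing estimate cannot be run uniformly down to the origin; the monotonicity argument for small $t$ circumvents this cleanly. I expect this route to be essentially routine but to yield a suboptimal constant $c$; recovering the genuinely sharp rate of \cite{MoH19} — sharp, in particular, in the borderline non-defective cases — instead calls for the entropy--dissipation method with a positive definite weight matrix $P$ finely adapted to the Jordan structure of $\A$, whose construction, rather than any of the estimates above, is the real technical core of that approach.
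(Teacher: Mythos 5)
Since the paper quotes Theorem~\ref{thm:monmarche} from \cite{MoH19} without reproducing a proof, the relevant comparison is with Monmarch\'e's own argument, and your route is genuinely different and, as far as I can tell, correct. In place of the generalised $\Gamma$-calculus / entropy--dissipation machinery with a matrix weight $P$ tailored to the Jordan structure of $\A$, you exploit the explicit Gaussian transition kernel \eqref{eq:FP_form}. The key steps all check out: the identity $\mathcal{Q}=\mathcal{Q}(t)+e^{-\A t}\mathcal{Q}e^{-\A^T t}$ is exactly the time-$t$ splitting of the integral representation of the stationary covariance, and it makes $f_\infty$ a fixed point of the convolution map $g_t * (T_t)_\#(\cdot)$; joint convexity of relative entropy applied to the disintegrations over an optimal $W_2$-coupling $\pi$, together with the Gaussian KL formula for equal covariances, gives $\HH\pa{f(t)|f_\infty}\le\tfrac12\norm{\mathcal{Q}(t)^{-1}}\norm{e^{-\A t}}^2 W_2^2(f_0,f_\infty)$; Talagrand's $T_2$ inequality for the Gaussian $f_\infty$ (with constant $\norm{\mathcal{Q}}$) closes the loop back to $\HH\pa{f_0|f_\infty}$; the Loewner monotonicity $\mathcal{Q}(t)\ge\mathcal{Q}(1)$ controls $\norm{\mathcal{Q}(t)^{-1}}$ for $t\ge1$; and the matrix-exponential bound $\norm{e^{-\A t}}\le C_\A(1+t^n)e^{-\eta t}$ supplies precisely the rate $(1+t^{2n})e^{-2\eta t}$. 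Your split at $t=1$ is indeed necessary because $\norm{\mathcal{Q}(t)^{-1}}$ blows up polynomially as $t\to0^+$ when $\B$ is degenerate, and entropy monotonicity along the Markov semigroup (with $f_\infty$ invariant) disposes of $t\in[0,1]$. What this elementary route trades away is sharpness of the constant: $c\sim\norm{\mathcal{Q}(1)^{-1}}\norm{\mathcal{Q}}C_\A^2$ will generally exceed the optimal constant of \cite{MoH19}, which you correctly flag; but since the statement only asserts \emph{some} explicit $c$ depending on $\A$ and $\B$, the argument is complete. (A minor aside unrelated to your proof: as written, the theorem's formula for $f_\infty$ is inconsistent with the Lyapunov equation --- if $\mathcal{Q}$ solves $2\B=\A\mathcal{Q}+\mathcal{Q}\A^T$ then $f_\infty\propto e^{-\frac12 x^T\mathcal{Q}^{-1}x}$ with normalisation $((2\pi)^d\det\mathcal{Q})^{1/2}$; your argument implicitly uses the correct reading, with $\mathcal{Q}$ the stationary covariance.)
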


The above theorem and Theorem \ref{thm:solution_to_MKE} provide us with the following result: 
\begin{theorem}\label{thm:profile}
	Consider the McKean-Vlasov equation \eqref{eq:MKE} with drift, interaction, and diffusion matrices $\CC$, $\KK$, and $\DD$ respectively. Assume in addition that the pair $\pa{\CC+\KK,\DD}$ is admissible and that the equation admits a unique solution. Let $\rho(\cdot,t)$ be the solution to the equation with initial unit mass datum $\rho_0\in L^1_+\pa{\R^d}\cap L^p\pa{\R^d}$ for some $p\in (1,\infty]$. Then, following on the notations for $s(t)$, $\rho_\infty$, $\mu$, and $n_1$ from Theorem \ref{thm:main} we have that
	\begin{equation}\label{eq:profile_convergence}
		\HH\pa{\rho(t)|\rho_\infty\pa{\cdot-s(t)}}\leq c \HH\pa{\rho_0|\rho_\infty}\pa{1+t^{2n_1}}e^{-2\mu t}
	\end{equation}
	for an explicit $c>0$ that only depends on $\CC$, $\KK$, and $\DD$. 
\end{theorem}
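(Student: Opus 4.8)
The plan is to combine the two structural results already established: the representation of the McKean--Vlasov solution as a shifted Fokker--Planck solution (Theorem~\ref{thm:solution_to_MKE}) and Monmarch\'e's sharp entropic convergence estimate for Fokker--Planck equations (Theorem~\ref{thm:monmarche}). First I would invoke Theorem~\ref{thm:solution_to_MKE} with $m_0=1$ (so that $\CC+m_0\KK=\CC+\KK$ and the shift \eqref{eq:general_s} reduces to the $s(t)$ of \eqref{eq:def_of_shift}) to write $\rho(x,t)=f^{\CC+\KK,\DD}_{FP}\pa{x-s(t),t}$, where $f^{\CC+\KK,\DD}_{FP}$ is the unique smooth solution of the Fokker--Planck equation \eqref{eq: FPE} with $\mathcal{A}=\CC+\KK$, $\mathcal{B}=\DD$, and initial datum $\rho_0$.

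The second ingredient is the elementary observation that the relative Boltzmann entropy is invariant under a common translation of both of its arguments: for any $a\in\R^d$ and unit-mass $f,g$, the change of variables $x\mapsto x+a$ in Definition~\ref{def:entropy} gives $\HH\pa{f(\cdot-a)|g(\cdot-a)}=\HH\pa{f|g}$. Since $\rho_\infty\pa{\cdot-s(t)}$ and $\rho(\cdot,t)$ both have unit mass, applying this with $a=s(t)$ yields
$$\HH\pa{\rho(t)|\rho_\infty\pa{\cdot-s(t)}}=\HH\pa{f^{\CC+\KK,\DD}_{FP}(t)\,|\,\rho_\infty}.$$
It then remains only to match notation with Theorem~\ref{thm:monmarche} for the pair $\pa{\CC+\KK,\DD}$: the matrix $\K$ is by definition the unique positive definite solution of $2\DD=\pa{\CC+\KK}\K+\K\pa{\CC+\KK}^T$, which is exactly the Lyapunov equation characterising the equilibrium covariance there, so $\rho_\infty=f_\infty$; moreover $\mu$ as in \eqref{eq:def_of_mu} is precisely the spectral quantity $\eta$ associated to $\mathcal{A}=\CC+\KK$, and $n_1$ is the corresponding maximal defect $n$. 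The hypotheses of Theorem~\ref{thm:monmarche} hold because $\pa{\CC+\KK,\DD}$ is admissible and $\rho_0\in L^1_+\pa{\R^d}\cap L^p\pa{\R^d}$, so it provides an explicit $c>0$ depending only on $\CC+\KK$ and $\DD$ (hence only on $\CC$, $\KK$, $\DD$) with $\HH\pa{f^{\CC+\KK,\DD}_{FP}(t)|\rho_\infty}\leq c\,\HH\pa{\rho_0|\rho_\infty}\pa{1+t^{2n_1}}e^{-2\mu t}$. Combining the two displays gives \eqref{eq:profile_convergence}.

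I do not expect a genuine obstacle here, as both building blocks are in hand and the argument is essentially bookkeeping; the two points that require care are (i) verifying that the translation produced by Theorem~\ref{thm:solution_to_MKE} is exactly $s(t)$ from \eqref{eq:def_of_shift}, which forces the normalisations $m_0=1$ and $s_G(0)=0$, and (ii) confirming that the integrability assumption on $\rho_0$ is precisely what Monmarch\'e's theorem consumes. The substantive difficulty of the paper, namely estimating the discrepancy between the moving profile $\rho_\infty\pa{\cdot-s(t)}$ and the genuine stationary state $\rho_\infty\pa{\cdot-s_\infty}$ of \eqref{eq:MKE}, does not enter this statement; it is exactly what remains to be done in the proof of Theorem~\ref{thm:main}, where one must control $\abs{s(t)-s_\infty}$ and propagate it through the entropy via a Csisz\'ar--Kullback--Pinsker-type comparison.
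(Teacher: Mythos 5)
Your proposal is correct and follows the paper's own argument essentially verbatim: apply Theorem~\ref{thm:solution_to_MKE} (with $m_0=1$) to represent $\rho(\cdot,t)$ as the shifted Fokker--Planck solution $f^{\CC+\KK,\DD}_{FP}(\cdot-s(t),t)$, observe that relative entropy is invariant under a simultaneous translation of both arguments, and then invoke Theorem~\ref{thm:monmarche} for the admissible pair $(\CC+\KK,\DD)$ after matching $\K$, $\mu$, and $n_1$ with $f_\infty$, $\eta$, and $n$. The two ``points that require care'' you flag are indeed the only bookkeeping checks, and your closing remark correctly identifies that the control of $\abs{s(t)-s_\infty}$ is deferred to the proof of Theorem~\ref{thm:main}.
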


\begin{proof}
	Following on Theorem \ref{thm:solution_to_MKE} we have that
	$$\HH\pa{\rho(t)|\rho_\infty\pa{\cdot-s(t)}} = \HH\pa{f_{FP}^{\CC+\KK,\DD}\pa{\cdot-s(t),t},|\rho_\infty\pa{\cdot-s(t)}}$$
	$$=\HH\pa{f_{FP}^{\CC+\KK,\DD}\pa{t}|\rho_\infty}\leq c \HH\pa{f_0|\rho_\infty}\pa{1+t^{2n_1}}e^{-2\mu t},$$
	where we have used the fact that a time translation in the spatial variable doesn't change the value of the relative entropy and Theorem \ref{thm:monmarche}.
\end{proof}

Theorem \ref{thm:profile} assures us that the solution to \eqref{eq:MKE} will ``stabilise'', in a sense, as long as $\pa{\CC+\KK,\DD}$ (or $\pa{\CC+m_0\KK,\DD}$ to be more general) is an admissible pair. The profile function, however, given by $\rho_\infty\pa{\cdot-s(t)}$ is not time independent in general. It is not surprising, however, that if $s(t)$ converges as time goes to infinity then a true equilibrium (which is by definition stationary) emerges. 

We present the following technical lemma to show that under the conditions of Theorem \ref{thm:main} this is always the case.

\begin{lemma}\label{lem:convergence_of_s}
	Let $\A$ and $\B$ be $d\times d$ matrices such that $\pa{\A+\B}$ is positively stable.
	Define the operator
	$$\xi(t):=e^{-\A t} - e^{-\pa{\A+\B}t}.$$
	Then $\xi(t)$ has a limit as $t$ goes to infinity if and only if $\A$ is almost-positively stable and in that case
	$$\lim_{t\to\infty} \xi(t) = P_{\ker \A}.$$
	 Moreover, if we denote by 
		\begin{equation}\nonumber
		\alpha: = \min\br{\Re \lambda\;|\; \lambda\text{ is an eigenvalue of }\A+\B},
	\end{equation}
	and
	\begin{equation}\nonumber
		\beta:=\begin{cases}
			\min\br{\Re \lambda\;|\; \lambda\not=0\text{ is an eigenvalue of }\A}, & \A\not= 0,\\
			0, & \A=0.
		\end{cases}
	\end{equation}
	Then for any $x\in \R^d$
	\begin{equation}\label{eq:convergence_of_s}
		\begin{aligned}
				\abs{\xi(t)x-P_{\ker \A}x} \leq & c_{\A}\pa{1+t^{n_2}}e^{-\beta t}\abs{\pa{\II-P_{\ker \A}}x}\\
				&+ c_{\A+\B}\pa{1+t^{n_1}}e^{-\alpha t}\abs{x},
		\end{aligned}
	\end{equation}
	where $c_{\A}>0$ and $c_{\A+\B}>0$ are explicit constants that depend only on $\A$ and $\B$, and  $n_1$ and $n_2$ are the maximal defect of all eigenvalues associated to $\alpha$ and $\beta$ respectively ($n_2$ is defined to be zero when $\A=0$). 
\end{lemma}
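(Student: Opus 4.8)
The plan is to reduce everything to the Jordan structure of $\A$ and of $\A+\B$ separately and then track the two exponentials in $\xi(t)=e^{-\A t}-e^{-(\A+\B)t}$ along the corresponding generalized eigenspaces. First I would handle the second term: since $\A+\B$ is positively stable, every entry of $e^{-(\A+\B)t}$ is a linear combination of terms $t^k e^{-\lambda t}$ with $\Re\lambda\geq\alpha>0$ and $k$ at most the defect of the corresponding eigenvalue, so there is an explicit constant $c_{\A+\B}$ (depending only on $\A+\B$, hence only on $\A,\B$) with $\norm{e^{-(\A+\B)t}}\leq c_{\A+\B}(1+t^{n_1})e^{-\alpha t}$; this is the standard estimate on the matrix exponential of a stable matrix obtained by conjugating to Jordan form and bounding each block. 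That already supplies the second line of \eqref{eq:convergence_of_s} by taking norms and multiplying by $\abs{x}$, and in particular $e^{-(\A+\B)t}\to 0$.

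Next I would analyze the first term using the decomposition from Remark \ref{rem:on_projection}. Write $x = P_{\ker\A}x + (\II-P_{\ker\A})x$, where $P_{\ker\A}$ is the projection onto $\ker\A$ along the $\A$-invariant complement $V = \span\{v_{r+1},\dots,v_d\}$. On $\ker\A$ we have $\A=0$, so $e^{-\A t}P_{\ker\A}x = P_{\ker\A}x$ for all $t$ — this is exactly where the hypothesis that the zero eigenvalue be semisimple (equal algebraic and geometric multiplicity) is used, since otherwise nilpotent Jordan blocks at $0$ would produce polynomially growing terms and $\xi(t)$ would diverge. On the invariant subspace $V$, the restriction $\A|_V$ has Jordan form $\bm J'$ with no zero diagonal entries, and if $\A\neq 0$ its eigenvalues all have real part $\geq\beta>0$; applying the same stable-exponential estimate as above to $\A|_V$ gives $\norm{e^{-\A t}|_V}\leq c_{\A}(1+t^{n_2})e^{-\beta t}$ (with the convention that this term is just controlled trivially when $\A=0$, in which case $V=\{0\}$ and there is nothing to estimate, consistent with $n_2:=0$). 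Combining, $e^{-\A t}x - P_{\ker\A}x = e^{-\A t}(\II-P_{\ker\A})x$, which is bounded by $c_{\A}(1+t^{n_2})e^{-\beta t}\abs{(\II-P_{\ker\A})x}$.

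Putting the two pieces together, $\xi(t)x - P_{\ker\A}x = e^{-\A t}(\II-P_{\ker\A})x - e^{-(\A+\B)t}x$, and the triangle inequality yields \eqref{eq:convergence_of_s}; since $P_{\ker\A}$ depends only on $\A$, the constants $c_{\A}$ and $c_{\A+\B}$ depend only on $\A$ and $\B$, as claimed. For the iff statement: the "if" direction is immediate from \eqref{eq:convergence_of_s}, giving $\xi(t)\to P_{\ker\A}$. For "only if", suppose $\A$ is not almost-positively stable; then either $\A$ has an eigenvalue with negative real part — impossible here, but in any case then $e^{-\A t}$ has an exponentially growing component not cancelled by the decaying $e^{-(\A+\B)t}$ — or $\A$ has a zero eigenvalue with a nontrivial nilpotent part, so $e^{-\A t}$ restricted to the corresponding generalized eigenspace contains a term $c\,t^k v$ with $k\geq 1$, which diverges, while $e^{-(\A+\B)t}\to 0$; hence $\xi(t)$ cannot converge. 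The main obstacle is bookkeeping rather than conceptual: one must make the dependence of the constants genuinely explicit (tracking the conjugation matrices to Jordan form and the binomial coefficients in the nilpotent expansions) and carefully isolate the role of semisimplicity of the zero eigenvalue; everything else is the routine estimate $\norm{e^{-Mt}}\leq c(1+t^{n})e^{-\gamma t}$ for a matrix $M$ all of whose eigenvalues have real part $\geq\gamma$.
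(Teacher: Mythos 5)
Your argument is correct and follows essentially the same route as the paper: bound $e^{-(\A+\B)t}$ by the standard Jordan-block estimate $\norm{e^{-Mt}}\leq c(1+t^n)e^{-\gamma t}$, use the identity $e^{-\A t}-P_{\ker\A}=e^{-\A t}(\II-P_{\ker\A})$ together with the fact that $\A$ restricted to $V=\operatorname{range}(\II-P_{\ker\A})$ is positively stable, and conclude by the triangle inequality. One small point worth noting: the paper's written proof only establishes the quantitative bound \eqref{eq:convergence_of_s} (from which the ``if'' direction and the identification of the limit follow), whereas you also sketch the ``only if'' direction by observing that a non-semisimple zero eigenvalue of $\A$ would produce a polynomially growing piece of $e^{-\A t}$ that $e^{-(\A+\B)t}\to 0$ cannot cancel; that is a correct and welcome completion of the equivalence claimed in the lemma.
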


\begin{proof}
	To show the desired result, we start by considering single $r\times r$ Jordan block matrix of the form 
	$$\mathfrak{J} = \begin{pmatrix}
		\lambda & 1 & 0 & \dots & 0 & 0 \\
		0 & \lambda &  1 & \dots & 0 & 0 \\
		\vdots & \vdots & \vdots & \vdots & \vdots & \vdots \\
		0 & 0 &  0 & \dots & \lambda & 1\\
		0 & 0 &  0 & \dots & 0 & \lambda  \\
	\end{pmatrix}$$
 A well known result from the theory of ODEs states that we can find an explicit constant $c_{\mathfrak{J}}$ such that\footnote{the interested reader may find a proof to this statement in the Proof of Lemma 4.7 in \cite{arnold2018}.}
 $$\norm{e^{-\mathfrak{J}t}} \leq c_{\mathfrak{J}}\pa{1+t^{r}}e^{-\lambda t}.$$
 The above implies that for any matrix $\mathcal{C}$ with a Jordan form 
 \begin{equation}\nonumber
 	\mathcal{J}=\begin{pmatrix}
 		\mathcal{J}_1 & \bm{0} & \dots & \bm{0} \\
 			\vdots & \vdots & \vdots & \vdots  \\
 		\bm{0} & \bm{0} & \dots & \mathcal{J}_l	
 	\end{pmatrix}
 \end{equation}
 we have that
 \begin{equation}\label{eq:general_positively_stable_decay}
 	\norm{e^{-\mathcal{C}t}} \leq c_{S} \norm{e^{-\mathcal{J}t}} \leq c_{S}\max_{i=1,\dots, l}c_{\mathcal{J}_i}\pa{1+t^{r_i}}e^{-\lambda_i t}
 \end{equation}
where $c_S$ is a constant that depends on the similarity matrix taking $\mathcal{C}$ to $\mathcal{J}$.\\
 We can thus conclude that, with the notation of the lemma, if $\A+\B$ is a positively stable matrix then
\begin{equation}\label{eq:norm_for_A+B}
	\norm{e^{-\pa{\A+\B}t}}\leq c_{\A+\B}\pa{1+t^{n_1}}e^{-\alpha t}.
\end{equation}
Next we notice that the definition of $P_{\ker \A}$, \eqref{eq:proj}, for an almost-positively stable matrix imply that 
$$e^{-\A t} - P_{\ker\A} = e^{-\A t}\pa{\II-P_{\ker\A}}.$$
Consequently, if we consider the $\A-$invariant space $V:=\mathrm{span}\pa{\II-P_{\ker\A}}$ on which $\A$ is positively stable (see Remark \ref{rem:on_projection}), we find that \eqref{eq:general_positively_stable_decay} implies that for any $x\in\R^d$
\begin{equation}\label{eq:norm_for_A_on_x}
	\begin{aligned}
		&\norm{e^{-\A t} x- P_{\ker\A}x} = \norm{e^{-\A t}\pa{\II-P_{\ker \A}}x}\\
		&\leq \norm{e^{-\A t}\vert_{V}}\abs{\pa{\II-P_{\ker\A}}x}\leq c_{\A}\pa{1+t^{n_2}}e^{-\beta t}\abs{\pa{\II-P_{\ker\A}}x}.
	\end{aligned}
\end{equation}
Since
$$	\abs{\xi(t)x-P_{\ker \A}x} \leq \abs{e^{-\A t}x - P_{\ker \A}x} + \abs{e^{-\pa{\A+\B}t}x}$$
the desired inequality follows directly from \eqref{eq:norm_for_A+B} and \eqref{eq:norm_for_A_on_x}.
\end{proof}

\begin{remark}\label{rem:on_s(t)}
	Lemma \ref{lem:convergence_of_s} and the conditions for Theorem \ref{thm:main} guarantee that $s(t)$, which is given by \eqref{eq:def_of_shift}, will converge to $s_\infty=P_{\ker \CC}\bm{m}_1$ as time goes to infinity. Moreover, using the notations of Theorem \ref{thm:main}
	\begin{equation}\label{eq:rate_of_convergence_of_s}
		\begin{aligned}
		\abs{s(t)-s_\infty} \leq  & c\pa{1+t^{n_2}}e^{-\nu t}\abs{\pa{\II-P_{\ker \CC}}\bm{m}_1}\\
		&+ c\pa{1+t^{n_1}}e^{-\mu t}\abs{\bm{m}_1},
		\end{aligned}
	\end{equation}
	for any appropriate explicit constant $c>0$.
	
	It is worth to mention that $s(t)$ may converge under less restrictive conditions than the ones imposed in Theorem \ref{thm:main}. For instance, if $\bm{m}_1=0$ then $s(t)=0$ for all time regardless of $\CC$ and $\KK$.
\end{remark}

We are now ready to prove our main theorem.

\begin{proof}[Proof of Theorem \ref{thm:main}]
	We start by noticing that 
	\begin{equation}\label{eq:equilibrium_proof_I}
	\begin{gathered}
		\HH\pa{\rho(t)|\rho_\infty\pa{\cdot-s_{\infty}}} = \int_{\R^d}\rho(x,t)\log\pa{\frac{\rho(x,t)}{\rho_\infty\pa{x-s_\infty}}}dx\\
		= \HH\pa{\rho(t)|\rho_\infty\pa{\cdot-s(t)}}+\int_{\R^d}\rho(x,t)\log\pa{\frac{\rho_\infty(x-s(t))}{\rho_\infty\pa{x-s_\infty}}}dx.
	\end{gathered}
	\end{equation}
	Since
	$$\frac{\rho_\infty(x-s(t))}{\rho_\infty\pa{x-s_\infty}} = e^{-\frac{1}{2}\rpa{\pa{x-s(t)}^T\K \pa{x-s(t)} - \pa{x-s_\infty}^T\K \pa{x-s_\infty} }}$$
	$$=e^{\pa{s(t)-s_\infty}^T\K x }e^{-\frac{1}{2}\rpa{s(t)^T \K s(t) - s_\infty^T \K s_\infty}}=e^{\pa{s(t)-s_\infty}^T\K x}e^{-\frac{1}{2}\pa{s(t)-s_\infty}^T\K \pa{s(t)+s_\infty}}, $$
where we have used the fact that for any symmetric matrix $\A$
$$\pa{x-y}^T \A \pa{x+y} = x^T \A x - y^T \A y,$$
we conclude that 
\begin{equation}\label{eq:equilibrium_proof_II}
	\begin{aligned}
	&\HH\pa{\rho(t)|\rho_\infty\pa{\cdot-s_{\infty}}} =\HH\pa{\rho(t)|\rho_\infty\pa{\cdot-s(t)}}\\
	&+\int_{\R^d}\pa{s(t)-s_\infty}^T\K x \rho(x,t)dx\\
	&- \frac{1}{2}\pa{s(t)-s_\infty}^T\K \pa{s(t)+s_\infty}\int_{\R^d}\rho\pa{x,t}dx\\
	&=\HH\pa{\rho(t)|\rho_\infty\pa{\cdot-s(t)}}+\pa{s(t)-s_\infty}^T\K e^{-\CC t}\bm{m}_1 \\
	& - \frac{1}{2}\pa{s(t)-s_\infty}^T\K \pa{s(t)+s_\infty}
	\end{aligned}
\end{equation}
where have used the conservation of (unit) mass and \eqref{eq:evolution_of_first_moment}.

Consequently, using the fact that $\K$ is positive definite and as such there exists $\kappa>0$ such that $\K\leq \kappa \II$, we find that
\begin{equation}\label{eq:equilibrium_proof_III}
	\begin{aligned}
		&\HH\pa{\rho(t)|\rho_\infty\pa{\cdot-s_{\infty}}} \leq \HH\pa{\rho(t)|\rho_\infty\pa{\cdot-s(t)}} \\
		&+ \frac{\kappa}{2}\abs{s(t)-s_\infty}\pa{\abs{P_{\ker \CC}\bm{m}_1}+\abs{\pa{e^{-\CC t}-P_{\ker \CC}}\bm{m}_1}}\\
		&+\frac{\kappa}{2}\abs{s(t)-s_\infty}^2+ \kappa\abs{s_\infty}\abs{s(t)-s_\infty}.
	\end{aligned}
\end{equation}
Using Theorem \ref{thm:profile},inequality \eqref{eq:rate_of_convergence_of_s}, the facts that $s_\infty=P_{\ker \CC}\bm{m}_1$ and   
$$\abs{\pa{e^{-\CC t}-P_{\ker \CC}}\bm{m}_1} \leq c_{\CC}\pa{1+t^{n_2}}e^{-\nu t}\abs{\pa{\II-P_{\ker\CC}}\bm{m}_1}$$
we find that
\begin{equation}\nonumber 
	\begin{aligned}
		&\HH\pa{\rho(t)|\rho_\infty\pa{\cdot-s_{\infty}}} \leq c \HH\pa{\rho_0|\rho_\infty}\pa{1+t^{2n_1}}e^{-2\mu t}\\
		&+c\rpa{\pa{1+t^{n_2}}e^{-\nu t}\abs{\pa{\II-P_{\ker \CC}}\bm{m}_1}+\pa{1+t^{n_1}}e^{-\mu t}\abs{\bm{m}_1}}\\
		&\Big[\pa{1+\pa{1+t^{n_1}}e^{-\mu t}}\abs{P_{\ker \CC}\bm{m}_1} + \pa{1+t^{n_2}}e^{-\nu t}\abs{\pa{\II-P_{\ker\CC}}\bm{m}_1}\Big]\\
	\end{aligned}
\end{equation}
for an appropriate constant $c>0$, which is the desired result. The proof is now complete. 
\end{proof}

\section{Final remarks}\label{sec:final}

While our main estimate, inequality \eqref{eq:explicit_convergence}, seems complicated, it gives us an easily computable and explicit rate of convergence to equilibrium. It is clear from the proof of our theorems, that various additional conditions on the drift, interaction, and diffusion matrices will allow us to refine our proofs and get better rates of convergence. For example,  in the case $\KK=0$  we find that $s(t)=s_\infty=0$ (as expected). Consequently, the rate of convergence to equilibrium, according to Theorem \ref{thm:profile}, is given by $\pa{1+t^{2n_1}}e^{-2\mu t}$, which is a better rate than the one we'll get from \eqref{eq:explicit_convergence}.


We would like to emphasise that the method we presented here can deal with many degenerate cases, such as the case $\CC=0$. In that case, as long as the pair $\pa{\KK,\DD}$ is admissible, we find that 
$$\HH\pa{\rho(t)|\rho_\infty\pa{\cdot-s_\infty}}\leq c \HH\pa{\rho_0|\rho_\infty}\pa{1+t^{2n_1}}e^{-2\mu t}+c\pa{1+t^{n_1}}e^{-\mu t}\abs{\bm{m}_1}.$$
This showcase the importance of the interaction term, governed by $\KK$, as in this instance a part of it takes the role of a drift which allows us to attain an equilibrium.

Another point we would like to mention is that the identity
\begin{equation}\nonumber
	\begin{aligned}
		&\HH\pa{\rho(t)|\rho_\infty\pa{\cdot-s_{\infty}}} =\HH\pa{\rho(t)|\rho_\infty\pa{\cdot-s(t)}}\\
		&+\pa{s(t)-s_\infty}^T\K e^{-\CC t}\bm{m}_1 
		 - \frac{1}{2}\pa{s(t)-s_\infty}^T\K \pa{s(t)+s_\infty},
	\end{aligned}
\end{equation}	
which was shown in the proof of our main theorem, Theorem \ref{thm:main}, shows that the optimal rate of convergence to equilibrium is determined by the profile convergence, $\HH\pa{\rho_0|\rho_\infty}\pa{1+t^{2n_1}}e^{-2\mu t}$, \textit{and} the optimal rate of convergence of $s(t)$ to $s_\infty$. As 
$$s(t)=\pa{e^{-\CC t}-e^{-\pa{\CC+\KK}t}}\bm{m}_1,$$
we immediately see the significant impact of $\CC$ and $\KK$, as well as the interplay between them, on the convergence. As an example we notice that since
\begin{equation}\nonumber
	\begin{aligned}
		\abs{s(t)-s_\infty} \leq  & c\pa{1+t^{n_2}}e^{-\nu t}\abs{\pa{\II-P_{\ker \CC}}\bm{m}_1}\\
		&+ c\pa{1+t^{n_1}}e^{-\mu t}\abs{\bm{m}_1},
	\end{aligned}
\end{equation}
when $\CC$ is not positively stable (but is almost positively stable), the above will dominate the convergence to equilibrium.


We would like to note that the methodology presented in this worked relied heavily on the fact that the interaction kernel was of a linear form, i.e. that $k(x,y)=\KK(x-y)$. This allowed us to recast our McKean-Vlasov equation as a standard Fokker-Planck equation with a ``transportation'' term. It is not clear to us if a more complicated interaction kernel will allow for similar manipulation which will result in more general Fokker-Planck equation but it is a direction we intend to explore. 

Lastly, we would like to mention that it would be interesting to investigate whether or not the result attained in this work can be achieved via a process of mean field limit from the particle system associated to the McKean-Vlasov equation \eqref{eq:MKE}.

	\section*{Acknowledgement}
	The authors would like to thank the reviewers of this work for their helpful comments and suggestions which have improved upon the work.

\appendix

\section{The verification of our solution}
In this short appendix we will show that the function
	$$\rho(x,t) =f_{FP}^{\CC+m_0\KK,\DD}\pa{x-s(t),t},$$
with $f_{FP}^{\A,\B}$ defined as in \eqref{eq:FP_form} and $s(t)$ defined in \eqref{eq:def_of_shift}, is a solution to our McKean-Vlasov equation \eqref{eq:MKE}.

  Since
	$$\frac{1}{\pa{2\pi \det\mathcal{Q}(t)}^{\frac{d}{2}}}\int_{\R^d}xe^{-\frac{1}{2}(x-e^{-\mathcal{A}t}y)^T \mathcal{Q}(t)^{-1}(x-e^{-\mathcal{A}t}y)}dx=e^{-\A t}y$$
	we have that
	$$\KK\int_{\R^d}xf_{FP}^{\CC+m_0\KK,\DD}\pa{x-s(t),t}dx=\KK s(t)\int_{\R^d}f_{FP}^{\CC+m_0\KK,\DD}\pa{x,t}dx$$
	$$+\KK\int_{\R^d}xf_{FP}^{\CC+m_0\KK,\DD}\pa{x,t}dx= \KK s(t) \int_{\R^d}\rho_0(y)dy + \KK\int_{\R^d}e^{-\pa{\CC+m_0\KK}t}y\rho_0(y)dy$$
	$$=m_0\KK s(t)  +\KK e^{-\pa{\CC+m_0\KK}t} \bm{m}_1=\KK\pa{e^{-\CC t}-e^{-\pa{\CC+m_0\KK}t}}\bm{m}_1$$
	$$+ \KK e^{-\pa{\CC+m_0\KK}t} \bm{m}_1 = \KK e^{-\CC t}\bm{m}_1.$$
	Consequently
	$$\KK \int_{\R^d}\pa{x-y}f_{FP}^{\CC+m_0\KK,\DD}\pa{y-s(t),t}dy=m_0\KK x-\KK e^{-\CC t}\bm{m}_1$$
	and we find that $\rho(x,t)$ satisfies
	$$\partial_t\rho(x,t) = \partial_t f_{FP}^{\CC+m_0\KK,\DD}(y(x,t),t) - \nabla_y f_{FP}^{\CC+m_0\KK,\DD}(y(x,t),t)\cdot \frac{ds}{dt}(t)$$
	$$\div_y\pa{\pa{\CC+m_0\KK} y(x,t) \rho(x,t)+\DD\nabla_x \rho(x,t)}-\pa{\KK e^{-\CC t}\bm{m}_1-\pa{\CC+m_0\KK}s(t)}\nabla_y f_{FP}^{\CC+m_0\KK,\DD}(y(x,t),t)$$
	$$=\div_y\pa{\pa{\pa{\CC+m_0\KK} y(x,t)-\pa{\KK e^{-\CC t}\bm{m}_1-\pa{\CC+m_0\KK}s(t)}} \rho(x,t)+\DD\nabla_x \rho(x,t)}$$
	$$=\div_y\pa{\pa{\pa{\CC+m_0\KK} \pa{y(x,t)+s(t)}-\KK e^{-\CC t}\bm{m}_1} \rho(x,t)+\DD\nabla_x \rho(x,t)}$$
	$$=\div\pa{\pa{\pa{\CC+m_0\KK} x-\KK e^{-\CC t}\bm{m}_1} \rho(x,t)+\DD\nabla_x \rho(x,t)}$$
	$$=\div\pa{\CC x \rho(x,t)+\pa{m_0\KK x-\KK e^{-\CC t}\bm{m}_1} \rho(x,t)+\DD\nabla_x \rho(x,t)}$$
	$$\div\pa{\CC x \rho(x,t)+ \pa{\int_{\R^d} \KK(x-y)\rho(y,t)dy}\rho(x,t)+ \DD\nabla \rho(x,t)},$$
	where we have used the notation $y(x,t)=x-s(t)$. This shows the desired result.
\bibliographystyle{plain}

\end{document}